\newtheorem{theorem}{Teorema}[section]
\newtheorem{conjecture}[theorem]{Conjetura}
\newtheorem{lemma}[theorem]{Lema}
\theoremstyle{remark}
\numberwithin{equation}{section}
\newcommand{\Ncal}{\mathscr{N}}
\newcommand{\Scal}{\mathscr{S}}
\newcommand{\Pro}{\mathbb{P}}
\newcommand{\Z}{\mathbb{Z}}
\newcommand{\C}{\mathbb{C}}
\newcommand{\Q}{\mathbb{Q}}
\newcommand{\rk}{\mathrm{rg}\,}
\newcommand{\MW}{\mathrm{MW}}
  \DeclareFontFamily{U}{wncy}{}
    \DeclareFontShape{U}{wncy}{m}{n}{<->wncyr10}{}
    \DeclareSymbolFont{mcy}{U}{wncy}{m}{n}
    \DeclareMathSymbol{\Sha}{\mathord}{mcy}{"58}
\begin{document}
\title[Superficies elípticas y el décimo problema de Hilbert]{Superficies elípticas y el décimo problema de Hilbert}

\author{Héctor Pastén}
\address{ Departamento de Matem\'aticas\newline
\indent Pontificia Universidad Cat\'olica de Chile, Facultad de Matem\'aticas\newline
\indent 4860 Av. Vicu\~na Mackenna,  Macul, RM, Chile}
\email[H. Pastén]{hector.pasten@mat.uc.cl}%

\thanks{Financiado por ANID (ex CONICYT) FONDECYT Regular 1190442 de Chile.}
\date{\today}
\subjclass[2010]{Primario: 11U05; Secundario: 14J27, 11G05} %
\keywords{Décimo problema de Hilbert, anillos de enteros, superficies elípticas, curvas elípticas}%

\begin{abstract}  Es sabido que se obtendría una solución negativa al décimo problema de Hilbert para el anillo de enteros $O_F$ de un campo de números $F$ si $\mathbb{Z}$ fuera diofantino en $O_F$. Denef y Lipshitz conjeturaron que esto último ocurre para todo $F$. En esta nota se demuestra que la conjetura de Denef y Lipshitz es consecuencia de una conocida conjetura sobre superficies elípticas.
\end{abstract}


\maketitle



\section{Introduction}

 A partir de la solución negativa al décimo problema de Hilbert \cite{DPR, Matiyasevich}  se han investigado varias extensiones a otras estructuras. Uno de los casos abiertos más destacados es el de anillos de enteros de campos de números. En este contexto el problema se ha resuelto negativamente para el anillo de enteros $O_F$ de varios tipos de campos de números $F$, ver \cite{DenefQuad, DenLip, Denef,   Pheidas, ShaShl, ShlH10, Videla, MazRubDS, GFP, Ray, KLS}. Sin embargo, el caso general sigue abierto y se espera una respuesta negativa. En efecto, si $\Z$ es diofantino en $O_F$ (ver la definición de ``conjunto diofantino'' en la sección siguiente) entonces el análogo del décimo problema de Hilbert en $O_F$ tiene respuesta negativa, mientras que Denef y Lipshitz \cite{DenLip} propusieron:

 \begin{conjecture}[Denef--Lipshitz] El anillo $\Z$ es diofantino en $O_F$ para todo campo de números $F$.
\end{conjecture}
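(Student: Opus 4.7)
The plan is to deduce the Denef--Lipshitz conjecture from a suitable conjecture on elliptic surfaces via the standard ``elliptic-curve bridge'' developed by Denef, Poonen, Shlapentokh, Cornelissen--Pheidas--Zahidi, Mazur--Rubin, and others. The first step is to recall the existing diophantine criterion: the existence of an elliptic curve $E$ defined over $\Q$ with $\mathrm{rank}(E(\Q)) = \mathrm{rank}(E(F)) = 1$, together with some mild auxiliary hypotheses (controlled torsion, reduction type), is known to imply that $\Z$ is diophantine in $O_F$. The challenge for an \emph{arbitrary} number field $F$ is producing such an $E$, and this is precisely where an elliptic surface should enter.

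To do this I would fix $F$ and work with a non-isotrivial elliptic surface $\pi\colon \mathcal{E} \to \mathbb{P}^1$ defined over $\Q$, whose generic fiber has Mordell--Weil rank $1$ over $\Q(t)$. By Silverman's specialization theorem, all but finitely many fibers $\mathcal{E}_t$, $t \in \Q$, are elliptic curves over $\Q$ with rank at least the generic rank. The ``known conjecture on elliptic surfaces'' that I expect to invoke is then a statement of the form: for a non-isotrivial elliptic surface over $\Q$ and any finite extension $F/\Q$, infinitely many specializations $t \in \Q$ satisfy $\mathrm{rank}(\mathcal{E}_t(F)) = \mathrm{rank}(\mathcal{E}_t(\Q))$; in other words, base change to $F$ fails to produce extra independent points at a positive-density set of fibers. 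Granting this, one picks a single such $t$ and feeds $E := \mathcal{E}_t$ into the diophantine criterion to conclude that $\Z$ is diophantine in $O_F$.

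The main obstacle is matching the precise form of the elliptic-surfaces conjecture to the input required by the diophantine criterion. Specifically, one must ensure (i) that the conjecture predicts \emph{simultaneous} control of the rank over $\Q$ and over $F$, not only over one of them; (ii) that the specialization furnishes an $E$ satisfying every auxiliary hypothesis (torsion, Tamagawa-type conditions, non-vanishing of the $L$-function if needed), which forces a careful choice of the surface $\pi$ at the outset; and (iii) that the finitely many $t$ excluded by Silverman's theorem do not swallow all the $t$ produced by the conjecture. A subtler point is that the naïve criterion sometimes asks for rank-preservation under infinitely many extensions $F'/F$ simultaneously, so one may need to iterate the construction (replacing $\Q$ by larger subfields of $F$) and therefore invoke the conjecture on several surfaces at once; organizing this so that only a single, clean statement about elliptic surfaces is used is where I expect the bulk of the work to lie.
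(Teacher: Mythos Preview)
Your plan misses the two structural ideas that make the paper's argument work, and as a result the ``conjecture on elliptic surfaces'' you end up needing is not the standard one. First, the paper does not try to compare ranks over $\Q$ and over the arbitrary field $F$ directly. It invokes Shlapentokh's reduction: to prove Denef--Lipshitz it suffices to show that $O_K$ is diophantine in $O_L$ for every \emph{quadratic} extension $L/K$ of number fields. Second, once $L=K(\sqrt{\beta})$ is quadratic, the identity $\rk E(L)=\rk E(K)+\rk E^{(\beta)}(K)$ converts the two-field comparison ``$\rk E(L)=\rk E(K)$'' into the single-field condition ``$\rk E^{(\beta)}(K)=0$''. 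This is precisely the trick that lets one appeal to the folklore conjecture in its usual form, namely that for a non-isotrivial surface of generic Mordell--Weil rank $0$ over $K$ the set $\Ncal(X,\pi,K)$ of fibers where the rank does not jump is infinite---a statement over one field, not the base-change stability statement you formulate.

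Concretely, the paper fixes a non-isotrivial surface $\pi:Y\to\Pro^1$ over $\Q$ whose Mordell--Weil group over $\C$ coincides with that over $\Q$ and is free of positive rank, and shows by an elementary Galois argument that the twisted surface $Y_K^{(\beta)}:\ \beta y^2=f(T,x)$ has trivial Mordell--Weil group over $K$. The assumed conjecture then gives infinitely many $\alpha\in K$ with $\rk E_\alpha^{(\beta)}(K)=0$, while Silverman's specialization theorem applied to the \emph{untwisted} surface (which has positive generic rank) ensures all but finitely many $\alpha$ satisfy $\rk E_\alpha(K)\ge 1$. Choosing $\alpha$ in the intersection and applying the Poonen--Shlapentokh criterion finishes the quadratic step; no torsion, Tamagawa, or $L$-function hypotheses are needed. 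Your worry about iterating over subfields of $F$ and organizing everything into ``a single, clean statement'' is exactly what the quadratic reduction plus the twist trick resolve, and without them your outline does not reach the standard conjecture the paper actually uses.
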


Aunque la conjetura de Denef y Lipshitz sigue abierta en general, los trabajos de Mazur--Rubin  \cite{MazRubH10} y de Murty--Pasten \cite{MurtyPasten} muestran que es consecuencia de conjeturas estándar sobre curvas elípticas. En esta nota se dará evidencia adicional, demostrando que la conjetura de Denef y Lipshitz también es consecuencia de una conocida conjetura sobre superficies elípticas.

 Dado un campo de números $K$, sea $\pi:X\to \Pro^1$ una superficie elíptica sobre $K$ con una sección distinguida $\sigma_0:\Pro^1\to X$. Por \cite{LangNeron} sabemos que el grupo de secciones $\MW(X,\pi,K)$ definidas sobre $K$ con neutro $\sigma_0$ es finitamente generado. Para todos salvo finitos $t\in \Pro^1(K)$ la fibra $X_{t}$ es una curva elíptica y se tiene el morfismo de especialización ${\rm esp}_t:\MW(X,\pi,K)\to X_t(K)$ dado por $\sigma\mapsto \sigma(t)$. Mejorando resultados de \cite{NeronSp}, Silverman \cite{SilvermanSp} demostró lo siguiente ($\rk$ es el rango):

\begin{theorem}[Silverman] \label{ThmSilvermanSp}
El morfismo de especialización ${\rm esp}_t$ es inyectivo para todos salvo finitos  $t$ en $\Pro^1(K)$. Así, para todos salvo finitos  $t$ en $\Pro^1(K)$ se tiene que $\rk X_t(K)\ge \rk \MW(X,\pi, K)$.
\end{theorem}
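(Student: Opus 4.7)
El plan es combinar la finitud de $\MW(X,\pi,K)$ (Lang--Néron) con el buen comportamiento de la altura de Néron--Tate al especializar. Basta demostrar la inyectividad de ${\rm esp}_t$ fuera de un conjunto finito de $t\in \Pro^1(K)$, pues la desigualdad de rangos se sigue de manera inmediata. Escribiría $\MW(X,\pi,K) = T \oplus \Z\sigma_1 \oplus \cdots \oplus \Z\sigma_r$ con $T$ finito, y analizaría por separado cómo se comporta ${\rm esp}_t$ sobre $T$ y sobre la parte libre.

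Para la parte de torsión, si $\tau,\tau'\in T$ son distintas, entonces $\tau-\tau'$ no coincide idénticamente con la sección neutra $\sigma_0$, por lo que $\tau(t)=\tau'(t)$ solo ocurre en finitos $t\in \Pro^1$. Como $T$ es finito, la unión sobre todos los pares distintos es un conjunto finito $T_0\subset \Pro^1(K)$ fuera del cual la restricción de ${\rm esp}_t$ a $T$ es inyectiva.

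El corazón técnico es el teorema de especialización de alturas. Sea $\hat h_\eta$ la altura canónica de la fibra genérica $X_\eta/K(t)$, que es una forma cuadrática definida positiva sobre $\MW(X,\pi,K)/T$, y $\hat h_t$ la altura de Néron--Tate sobre $X_t(K)$ para cada $t$ donde $X_t$ es suave. Establecería, para cada $\sigma\in \MW(X,\pi,K)$,
$$\hat h_t(\sigma(t)) = \hat h_\eta(\sigma)\cdot h_{\Pro^1}(t) + O_\sigma\bigl(\sqrt{h_{\Pro^1}(t)}\bigr).$$
Aplicándolo a cada $\sigma_i$ y a las sumas $\sigma_i+\sigma_j$, y usando polarización, la matriz de Gram de $(\sigma_1(t),\ldots,\sigma_r(t))$ respecto a $\hat h_t$ difiere de $h_{\Pro^1}(t)$ veces la matriz de Gram genérica en un error $O(\sqrt{h_{\Pro^1}(t)})$; como la matriz genérica es definida positiva, la especializada también lo es para $h_{\Pro^1}(t)$ suficientemente grande, y por tanto $\sigma_1(t),\ldots,\sigma_r(t)$ resultan linealmente independientes en $X_t(K)$. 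Por Northcott, el conjunto de $t\in \Pro^1(K)$ con $h_{\Pro^1}(t)$ acotada es finito; uniéndolo con $T_0$ se obtiene el conjunto excepcional buscado.

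La parte difícil es precisamente la estimación de alturas. La estrategia usual descompone $\hat h_\eta$ y $\hat h_t$ en alturas locales asociadas a un divisor simétrico adecuado en $X$, y controla la discrepancia entre las contribuciones locales genéricas y las especializadas utilizando el modelo de Néron del fibrado elíptico; el exponente $1/2$ en el error proviene de un análisis fino sobre las fibras singulares y constituye el grueso del trabajo técnico.
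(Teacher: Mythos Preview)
The paper does not prove this theorem; it simply attributes it to Silverman and cites \cite{SilvermanSp}. There is therefore no proof in the paper to compare your proposal against.

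That said, your sketch follows the standard line of Silverman's argument: reduce to showing that the specialized sections remain independent via the N\'eron--Tate pairing, control the specialized Gram matrix by the generic one when $h_{\Pro^1}(t)$ is large, and finish with Northcott. Two minor points. First, Silverman's original theorem only gives
\[
\lim_{h_{\Pro^1}(t)\to\infty}\frac{\hat h_t(\sigma(t))}{h_{\Pro^1}(t)}=\hat h_\eta(\sigma),
\]
i.e.\ an error $o(h_{\Pro^1}(t))$, which already suffices for your Gram-matrix argument; the sharper $O(1)$ bound (stronger than the $O(\sqrt{h_{\Pro^1}(t)})$ you state) is due to Tate for elliptic pencils and to later work in general. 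Second, injectivity on $T$ together with linear independence of the $\sigma_i(t)$ does not formally yield injectivity of ${\rm esp}_t$ on all of $\MW(X,\pi,K)$: you also need that no nonzero integer combination $\sum n_i\sigma_i(t)$ is torsion. This is immediate from your height argument, since positive definiteness of the pairing forces such a combination to have strictly positive canonical height, but it is worth making explicit.
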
 
Así, es difícil no definir los conjuntos de puntos $t\in \Pro^1(K) $ donde $\rk X_t(K)=\rk \MW(X,\pi,K)$ y donde $\rk X_t(K)>\rk \MW(X,\pi,K)$, los cuales denotaremos por $\Ncal(X,\pi,K)$ y $\Scal(X,\pi,K)$ respectivamente; esto es, donde el rango no salta y donde el rango salta. Es una conjetura que forma parte del folklore del área que si $\pi:X\to \Pro^1$ no es isotrivial, entonces ambos conjuntos deberían ser infinitos. Este problema ha capturado considerable atención, ver por ejemplo \cite{Salgado} y las referencias ahí citadas. Nuestra contribución es:

\begin{theorem}\label{ThmMain2} Suponga que para todo campo de números $K$ y toda superficie elíptica no isotrivial $\pi:X\to \Pro^1$ sobre $K$ con $\rk \MW(X,\pi,K)=0$ se tiene que $\Ncal(X,\pi,K)$ es infinito. Entonces la conjetura de Denef y Lipshitz es correcta: $\Z$ es diofantino en $O_F$ para todo campo de números $F$.
\end{theorem}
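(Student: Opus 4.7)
Mi plan es reducir la conjetura de Denef--Lipshitz, vía los resultados de Mazur--Rubin \cite{MazRubH10} y Shlapentokh, a un problema sobre existencia de curvas elípticas con rango no creciente en extensiones, y producir tales curvas combinando el Teorema \ref{ThmSilvermanSp} (Silverman) con la hipótesis del enunciado aplicada a un torcido cuadrático adecuado.

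\textbf{Reducción.} Por los resultados citados, basta demostrar: para cada campo de números $F$ y cada $d\in F^\times\setminus (F^\times)^2$, existe una curva elíptica $E/F$ con $\rk E(F)\ge 1$ y $\rk E^d(F)=0$, donde $E^d$ es el torcido cuadrático por $d$. La descomposición $\rk E(L)=\rk E(F)+\rk E^d(F)$ para $L=F(\sqrt d)$ implica entonces $\rk E(F)=\rk E(L)\ge 1$, y Mazur--Rubin da una definición diofantina de $O_F$ en $O_L$; iterando a lo largo de una torre adecuada se obtiene que $\Z$ es diofantino en $O_F$ para todo $F$. (Extensiones cíclicas de grado primo arbitrario requerirían ajustes análogos usando torcidos por caracteres de orden superior.)

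\textbf{Construcción y especialización.} Para $(F,d)$ fijo, el plan es construir una superficie elíptica no isotrivial $\pi\colon X\to\Pro^1$ sobre $F$ con sección distinguida que cumpla (a) $\rk\MW(X,\pi,F)\ge 1$, y (b) $\rk\MW(X^d,\pi^d,F)=0$, donde $\pi^d\colon X^d\to\Pro^1$ es el torcido cuadrático de la fibración por el escalar $d$. Como la $j$-invariante genérica no cambia al torcer escalarmente, $X^d$ es también no isotrivial, así que la hipótesis del teorema aplicada a $(X^d,\pi^d,F)$ proporciona infinitos $t\in \Pro^1(F)$ con $\rk (X^d)_t(F)=0$. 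El Teorema \ref{ThmSilvermanSp} aplicado a $X$ garantiza $\rk X_t(F)\ge \rk\MW(X,\pi,F)\ge 1$ para todo $t$ salvo un conjunto finito. Como el torcido escalar conmuta con la especialización en puntos $F$-racionales, esto es $(X_t)^d=(X^d)_t$, la intersección de ambos conjuntos de $t$'s es infinita, y podemos escoger $t_0\in \Pro^1(F)$ en ella; la curva $E:=X_{t_0}$ satisface entonces $\rk E(F)\ge 1$ y $\rk E^d(F)=0$, entregando lo requerido en la reducción.

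\textbf{Obstáculo principal.} El punto crítico es construir, para cada par $(F,d)$, una superficie $X$ no isotrivial con las propiedades (a) y (b). Una estrategia natural consiste en partir de una superficie no isotrivial con rango geométrico igual a $1$ sobre $\bar F(t)$ y un generador definido sobre $F$: el torcido por $d$ preserva el rango geométrico pero \emph{genéricamente} no admite generadores $F$-racionales, lo que entregaría (b). La dificultad técnica consiste en asegurar esta propiedad uniformemente en $d$ y $F$, descartando la aparición de secciones $F$-racionales inesperadas para valores excepcionales de $d$ (un argumento de tipo Hilbert en un espacio de parámetros de superficies podría ser útil aquí), y verificar que la torsión preserva la no isotrivialidad. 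Un punto secundario, pero necesario para completar el argumento, es garantizar que el paso de reducción inicial cubra todo campo de números, extendiendo, si es necesario, el argumento de extensiones cuadráticas a extensiones cíclicas de grado primo arbitrario.
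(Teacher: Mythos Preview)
Your overall strategy coincides with the paper's: reduce to quadratic extensions, build a non-isotrivial surface $X/K$ with $\rk\MW(X,\pi,K)\ge 1$ whose quadratic twist $X^d$ has $\rk\MW(X^d,\pi^d,K)=0$, then combine Silverman's specialization on $X$ with the hypothesis on $X^d$ and finish via the rank formula and the Poonen--Shlapentokh criterion. So the architecture is right.

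The genuine gap is exactly the step you yourself flag as the \emph{obst\'aculo principal}: you do not construct, for every pair $(K,d)$, a surface satisfying (a) and (b). You suggest starting from a surface of geometric Mordell--Weil rank~$1$ with an $F$-rational generator and hoping the twist loses that generator, but you neither exhibit such surfaces nor rule out unexpected $K$-rational sections (including $2$-torsion sections, which survive twisting and which your sketch ignores). The paper closes this gap cleanly and uniformly: it fixes once and for all a non-isotrivial surface $\pi:Y\to\Pro^1$ over $\Q$ whose Mordell--Weil group over $\C$ coincides with $\MW(Y,\pi,\Q)$ and is \emph{free} of positive rank (Lema~\ref{LemmaES}). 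Then for \emph{any} number field $K$ and any non-square $\beta\in K^\times$, the twist $Y_K^{(\beta)}$ has trivial $K$-Mordell--Weil group (Lema~\ref{LemmaRkBeta}): a nontrivial section $(a,b)$ with $a,b\in K(T)$ has $b\ne 0$ (no geometric $2$-torsion), so $(a,\sqrt{\beta}\,b)$ is a section of $Y$ over $\C$, hence over $\Q$ by the choice of $Y$, forcing $\sqrt{\beta}\in K$. Thus the ``uniformity in $d$ and $F$'' you worry about comes for free from the single condition ``geometric $\MW$ equals rational $\MW$ and is torsion-free''.

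A secondary point: your concern about extending the reduction to cyclic extensions of arbitrary prime degree is unnecessary. Shlapentokh's reduction (Lema~\ref{LemmaShlQuad}) already shows that handling all quadratic extensions of number fields suffices for the full Denef--Lipshitz conjecture, so no higher-order twists are needed.
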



\section{Preliminares}

Dado un campo de números $F$, un conjunto $S\subseteq O_F$ es \emph{diofantino} si existe un polinomio $P(x,y_1,...,y_n)\in O_F[x,y_1,...,y_n]$ (para algún $n$) tal que dado cualquier $a\in O_F$, se tiene que $a\in S$ si y solo si la ecuación $P(a,y_1,...,y_n)=0$ tiene solución sobre $O_F$. Por ejemplo, los cuadrados de $O_F$ forman un conjunto diofantino, tomando $P(x,y_1)=x-y_1^2$.

En un trabajo pionero, Denef \cite{Denef} relacionó conjuntos diofantinos en anillos de enteros a estabilidad de rangos de curvas elípticas en extensiones. Este tema fue desarrollado por Poonen \cite{Poonen},  Cornelissen--Pheidas--Zahidi \cite{CPZ}, y mas recientemente por Shlapentokh \cite{ShlE}, quien obtuvo (simultáneamente con Poonen) el teorema siguiente:

\begin{theorem}[Poonen, Shlapentokh]\label{ThmEllCriterion} Sea $L/K$ una extensión de campos de números. Si hay una curva elíptica $E$ sobre $K$ con $\rk E(L)=\rk E(K)>0$, entonces $O_K$ es diofantino en $O_L$.
\end{theorem}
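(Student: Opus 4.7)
The plan is to combine Theorem~\ref{ThmEllCriterion} with a quadratic twist construction on elliptic surfaces: start from a non-isotrivial surface of positive Mordell--Weil rank that acquires no new sections after base change to $L=K(\sqrt{D})$, pass to the $D$-twist, which then has Mordell--Weil rank zero, and apply the hypothesis to find the rank-preserving elliptic curves Theorem~\ref{ThmEllCriterion} requires.

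The first step is the routine reduction. Combining Theorem~\ref{ThmEllCriterion} with the fact that $\mathbb{Z}$ diophantine in $\mathcal{O}_L$ implies $\mathbb{Z}$ diophantine in $\mathcal{O}_F$ for any $F\subseteq L$ (using that $\mathcal{O}_L$ is a finitely generated $\mathcal{O}_F$-module, one rewrites a polynomial equation over $\mathcal{O}_L$ as a system of polynomial equations over $\mathcal{O}_F$), proving the Denef--Lipshitz conjecture reduces to showing: for every extension $L/K$ of number fields arising in a chosen tower from $\mathbb{Q}$ up to some $L\supseteq F$, there exists $E/K$ with $\rk E(L)=\rk E(K)>0$.

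The heart of the argument is the quadratic case $L=K(\sqrt{D})$. I would fix a non-isotrivial elliptic surface $\pi\colon\mathcal{E}\to\mathbb{P}^1$ over $K$ satisfying (i)~$\rk\MW(\mathcal{E},\pi,K)\ge 1$ and (ii)~$\MW(\mathcal{E},\pi,L)\otimes\mathbb{Q}=\MW(\mathcal{E},\pi,K)\otimes\mathbb{Q}$, that is, no new sections appear after base change by $\sqrt{D}$. The $\Gal(L/K)$-eigenspace decomposition
\[
\MW(\mathcal{E},\pi,L)\otimes\mathbb{Q}=\bigl(\MW(\mathcal{E},\pi,K)\otimes\mathbb{Q}\bigr)\oplus\bigl(\MW(\mathcal{E}^{(D)},\pi,K)\otimes\mathbb{Q}\bigr),
\]
combined with (ii), forces $\rk\MW(\mathcal{E}^{(D)},\pi,K)=0$ for the quadratic twist surface $\mathcal{E}^{(D)}$, which remains non-isotrivial. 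Applying the hypothesis to $\mathcal{E}^{(D)}$ yields infinitely many $t\in\mathbb{P}^1(K)$ with $\rk\mathcal{E}^{(D)}_t(K)=0$. For cofinitely many such $t$, Theorem~\ref{ThmSilvermanSp} applied to $\mathcal{E}$ gives $\rk\mathcal{E}_t(K)\ge 1$, and since $\mathcal{E}^{(D)}_t$ is the quadratic twist of $\mathcal{E}_t$ by $D$,
\[
\rk\mathcal{E}_t(L)=\rk\mathcal{E}_t(K)+\rk\mathcal{E}^{(D)}_t(K)=\rk\mathcal{E}_t(K)>0.
\]
This produces the rank-preserving elliptic curve $E=\mathcal{E}_t$ required by Theorem~\ref{ThmEllCriterion}, showing $\mathcal{O}_K$ is diophantine in $\mathcal{O}_L$.

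For a general $F$, I would build a tower $\mathbb{Q}=K_0\subset K_1\subset\cdots\subset K_n=L$ of quadratic extensions reaching some $L\supseteq F$ (possibly after enlarging $F$ by adjoining square roots), apply the quadratic case at each step (so that $\mathcal{O}_{K_i}$ is diophantine in $\mathcal{O}_{K_{i+1}}$), compose along the tower to obtain $\mathbb{Z}$ diophantine in $\mathcal{O}_L$, and descend to $F$ via the subfield reduction above. Fields $F$ whose degree over $\mathbb{Q}$ is not a power of $2$ cannot be embedded in such an $L$, and require analogous twist constructions adapted to cyclic steps of higher order; the cubic case with $\omega\in K$, for example, uses elliptic curves with complex multiplication by $\mathbb{Z}[\omega]$ and the corresponding cubic character twist on an appropriate family.

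The main obstacle is constructing the surface $\mathcal{E}$ of the quadratic step satisfying both (i) and (ii). Positive Mordell--Weil rank over $K$ is easy to arrange, but forcing (ii) --- that no new rational sections appear after base change to $L$ --- is a condition on the $\Gal(L/K)$-action on the Mordell--Weil lattice of $\mathcal{E}_{\bar{K}}$, which must be verified explicitly to invoke the hypothesis on $\mathcal{E}^{(D)}$. This can plausibly be accomplished by writing explicit Weierstrass families over $K[t]$ and controlling the Galois action via Shioda--Tate and Néron--Severi considerations. A secondary difficulty is the tower reduction for fields not contained in any $2$-power extension of $\mathbb{Q}$, where odd-order cyclic steps must be accommodated through the aforementioned higher-order twist analogues.
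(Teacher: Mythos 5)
Your proposal does not prove the statement at hand. Theorem~\ref{ThmEllCriterion} \emph{is} the Poonen--Shlapentokh criterion: given a single elliptic curve $E/K$ with $\rk E(L)=\rk E(K)>0$, conclude that $O_K$ is diophantine in $O_L$. What you have written instead takes Theorem~\ref{ThmEllCriterion} as a black box (your opening sentence announces the plan ``to combine Theorem~\ref{ThmEllCriterion} with a quadratic twist construction'') and uses it to derive the Denef--Lipshitz conjecture from a hypothesis on non-isotrivial elliptic surfaces of Mordell--Weil rank zero. That is precisely the content and the argument of Theorem~\ref{ThmMain2} of this paper --- the twisted surface, the rank-zero fibers supplied by the non-jumping hypothesis, Silverman's specialization theorem to guarantee $\rk E_\alpha(K)\ge 1$, and the identity $\rk E_\alpha(L)=\rk E_\alpha(K)+\rk E_\alpha^{(\beta)}(K)$ --- not a proof of Theorem~\ref{ThmEllCriterion}. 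Relative to the task, the argument is circular: it assumes the very statement it is supposed to establish and proves a different theorem with it.

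Note also that the paper itself does not prove Theorem~\ref{ThmEllCriterion}; it quotes it from Poonen [Po02] and Shlapentokh [Sh08]. An actual proof runs along entirely different lines from anything in your sketch: since $\rk E(L)=\rk E(K)$, the subgroup $E(K)$ has finite index in the finitely generated group $E(L)$, so some integer multiple sends $E(L)$ into $E(K)$; one then takes a point $P$ of infinite order and shows that the coordinates of the multiples $nP$ form a ``division-ample'' set in the sense of Denef and Cornelissen--Pheidas--Zahidi, whose divisibility and approximation properties yield an existential definition of $O_K$ inside $O_L$. None of that machinery appears in your proposal. (Your surface construction is, incidentally, a reasonable reconstruction of Section~3 of the paper --- including the difficulty you flag about arranging that no new sections appear over $L$, which the paper resolves via Lema~\ref{LemmaES} and Lema~\ref{LemmaRkBeta} --- but that is the proof of a different statement.)
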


El resultado anterior ha jugado un papel central en todos los últimos desarrollos en la conjetura de Denef y Lipshitz. 

Más aún, usando \cite{Denef}, Shlapentokh probó la siguiente sorprendente reducción \cite{MRS}:

\begin{lemma}[Shlapentokh]\label{LemmaShlQuad} Suponga que para toda extensión cuadrática de campos de números $L/K$ se tiene que $O_K$ es diofantino en $O_L$. Entonces la conjetura de Denef y Lipshitz es correcta.
\end{lemma}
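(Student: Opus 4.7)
The plan is to reduce the Denef--Lipshitz conjecture, in three steps, to the construction of one auxiliary elliptic surface per quadratic extension of number fields. First, by Lemma \ref{LemmaShlQuad} it suffices to show that $O_K$ is diophantine in $O_L$ for every quadratic extension $L/K$ of number fields; second, by Theorem \ref{ThmEllCriterion} it is enough to exhibit an elliptic curve $E/K$ with $\rk E(L)=\rk E(K)>0$. Writing $L=K(\sqrt d)$ and using the standard $\Gal(L/K)$-equivariant decomposition
$$E(L)\otimes \Q\;=\;(E(K)\otimes \Q)\oplus(E_d(K)\otimes \Q),$$
where $E_d/K$ denotes the quadratic twist of $E$ by $d$, this requirement translates into $\rk E(K)>0$ together with $\rk E_d(K)=0$.

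Next I would realize $E$ as a fiber $X_t$ of a suitable non-isotrivial elliptic surface $\pi:X\to \Pro^1$ over $K$, chosen together with its quadratic twist $\pi_d:X_d\to \Pro^1$ so that $\rk \MW(X,\pi,K)>0$ and $\rk \MW(X_d,\pi_d,K)=0$. The analogous $\Gal(L/K)$-decomposition at the level of the total space,
$$\MW(X_L,\pi_L,L)\otimes \Q\;=\;(\MW(X,\pi,K)\otimes \Q)\oplus(\MW(X_d,\pi_d,K)\otimes \Q),$$
exhibits these two conditions as being equivalent to asking that $X$ has positive Mordell--Weil rank over $K$ and that this rank does not grow under the base change $K\to L$. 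Granting such an $X$, Silverman's Theorem \ref{ThmSilvermanSp} applied to $X/K$ yields $\rk X_t(K)\ge \rk \MW(X,\pi,K)>0$ for all but finitely many $t\in \Pro^1(K)$. At the same time, $X_d$ shares the $j$-invariant of $X$ and is therefore non-isotrivial, so the hypothesis of Theorem \ref{ThmMain2} applied to $X_d$ produces infinitely many $t\in \Pro^1(K)$ with $\rk (X_d)_t(K)=0$. Any $t$ in the intersection of these two sets gives an elliptic curve $E=X_t/K$ with $\rk E(K)>0$ and $\rk E_d(K)=\rk (X_d)_t(K)=0$, hence $\rk E(L)=\rk E(K)>0$, as desired.

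The main obstacle, and where the bulk of the technical work lies, is the construction of a surface $X/K$ subject to the two Mordell--Weil constraints, uniformly in the quadratic datum $d$. The most convenient route I would pursue is to fix once and for all a non-isotrivial elliptic surface $X_0/\Q$ whose geometric Mordell--Weil group is, modulo torsion, freely generated by a single $\Q$-rational section; then $\rk \MW((X_0)_F,F)=1$ for every field extension $F/\Q$, so that the base change $X=X_0\times_\Q K$ automatically satisfies $\rk \MW(X,\pi,K)=\rk \MW(X_L,\pi_L,L)=1$, forcing $\rk \MW(X_d,\pi_d,K)=0$ for every quadratic $L=K(\sqrt d)$. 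Producing such an $X_0$ is plausible using explicit rational elliptic surfaces of geometric Mordell--Weil rank one (in the spirit of Shioda's classification), but checking rigorously both the non-isotriviality and the $\Q$-rationality of the geometric generator is the delicate step on which the argument hinges.
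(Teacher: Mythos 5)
Your proposal does not prove the statement in question. The statement is Lemma~\ref{LemmaShlQuad} itself --- Shlapentokh's reduction asserting that \emph{if} $O_K$ is diofantino en $O_L$ for every quadratic extension $L/K$, \emph{then} $\Z$ is diofantino en $O_F$ for every number field $F$. Your very first step is ``by Lemma~\ref{LemmaShlQuad} it suffices to show that $O_K$ is diophantine in $O_L$ for every quadratic extension,'' i.e.\ you invoke the statement you were asked to prove and then go on to prove something else, namely (a variant of) Teorema~\ref{ThmMain2}. Relative to the assigned task this is circular: everything that follows (the twist decomposition $\rk E(L)=\rk E(K)+\rk E_d(K)$, the auxiliary elliptic surface, Silverman's specialization theorem, Theorem~\ref{ThmEllCriterion}) is machinery for producing the quadratic-extension input, not for explaining why that input implies the full Denef--Lipshitz conjecture. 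The paper itself offers no proof of this lemma --- it is quoted from [MRS22] --- but that does not make your text a proof of it.

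What is actually missing is the entire content of the reduction from arbitrary $F$ to quadratic steps. A proof along the lines of [MRS22] and Denef's work would need, at minimum: (i) transitivity of the relation ``$O_K$ is diofantino en $O_L$'' along towers of fields; (ii) a going-down principle (via an integral basis of $O_L$ over $O_F$, i.e.\ restriction of scalars) showing that if $\Z$ is diofantino en $O_L$ for \emph{some} $L\supseteq F$ then $\Z$ is diofantino en $O_F$, so that one may replace $F$ by a conveniently chosen overfield; (iii) the known unconditional inputs (Denef, Denef--Lipshitz: $\Z$ is diofantino en $O_K$ for $K$ totally real, and more); and (iv) a field- and group-theoretic construction of a tower of quadratic extensions connecting such a base field to an overfield of $F$, to which the hypothesis is applied step by step. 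None of these ingredients appears in your write-up. Separately, even viewed as an attempted proof of Teorema~\ref{ThmMain2}, your construction is weaker than the paper's: you defer the existence of the auxiliary surface $X_0/\Q$ with geometric Mordell--Weil group generated by a single $\Q$-rational section to a ``delicate step,'' whereas the paper isolates exactly what is needed in Lema~\ref{LemmaES} (geometric sections all defined over $\Q$, free of positive rank) and deduces the vanishing of the twisted Mordell--Weil rank directly in Lema~\ref{LemmaRkBeta}.
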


Además de lo anterior, necesitaremos un ejemplo conveniente de superficie elíptica.

\begin{lemma}\label{LemmaES} Existe una superficie elíptica no isotrivial $\pi:Y\to \Pro^1$ definida sobre $\Q$ tal que el grupo de secciones sobre $\C$ es exactamente  $\MW(Y,\pi, \Q)$, y este grupo es libre abeliano de rango positivo.
\end{lemma}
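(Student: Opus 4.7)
The plan is to construct $Y$ explicitly as a rational elliptic surface arising from a generic pencil of plane cubics over $\Q$. First, I would fix eight distinct $\Q$-rational points $P_1,\ldots,P_8\in \Pro^2(\Q)$ in sufficiently general position. These impose eight independent conditions on cubics and thus determine a pencil of cubics defined over $\Q$; by Bezout and Cayley--Bacharach the pencil has a ninth base point $P_9$, which is then automatically $\Q$-rational (it is determined from the first eight by linear algebra over $\Q$). Let $Y$ be the blow-up of $\Pro^2_\Q$ at $P_1,\ldots,P_9$, and let $\pi:Y\to \Pro^1$ be the fibration induced by the pencil. Then $\pi$ is an elliptic fibration over $\Q$ with nine $\Q$-rational sections $s_1,\ldots,s_9$ coming from the exceptional divisors; take $s_9$ as the zero section.

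Next I would verify the structural properties. For a generic choice of the initial eight points, the $j$-invariant of the generic fiber is non-constant on $\Pro^1$, so $\pi$ is non-isotrivial, and all twelve singular fibers are irreducible (of Kodaira type $I_1$). Both conditions are open and non-empty in the moduli of such pencils. With all fibers irreducible the Shioda--Tate formula gives $\rk \MW(Y,\pi,\C)=\rho(Y)-2=8$, and by Shioda's theorem on Mordell--Weil lattices of rational elliptic surfaces the corresponding lattice is isomorphic to $E_8$. In particular $\MW(Y,\pi,\C)$ is free abelian of positive rank.

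Finally I would show $\MW(Y,\pi,\C)=\MW(Y,\pi,\Q)$. The Néron--Severi group $\NS(Y_{\overline\Q})$ is freely generated by the hyperplane class $H$ and the exceptional divisors $E_1,\ldots,E_9$, all of which are $\Q$-rational by construction, so $\Gal(\overline\Q/\Q)$ acts trivially on $\NS(Y_{\overline\Q})$. Since the trivial sublattice spanned by the fiber class and $s_9$ is also $\Q$-rational, Galois acts trivially on the quotient that computes $\MW(Y,\pi,\C)$. Combined with the presence of a $\Q$-rational section, a standard descent argument for sections of elliptic surfaces over number fields then yields $\MW(Y,\pi,\Q)=\MW(Y,\pi,\C)$.

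The main technical point is this last step, namely lifting Galois-invariance of a section class to actual rationality of the section. The relevant Brauer obstruction vanishes because $s_9$ is a $\Q$-rational section, so the argument is standard, but it should be spelled out carefully to ensure the equality $\MW(Y,\pi,\Q)=\MW(Y,\pi,\C)$ rather than merely a finite-index inclusion.
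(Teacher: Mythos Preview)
Your construction is correct and produces precisely the kind of surface the paper has in mind. The paper does not actually prove this lemma: it simply remarks that such surfaces abound and cites Schwartz's explicit example of a rational elliptic surface over $\Q$ with Mordell--Weil lattice $E_8$ realized over $\Q$. Your generic pencil of plane cubics through eight $\Q$-points in general position yields exactly this class of surfaces, so the two routes agree in substance; the difference is only that the paper points to one explicit instance while you argue that such instances are Zariski-generic among such pencils (and hence can be found over $\Q$, since $(\Pro^2)^8$ is rational).

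One comment on your final step. You present the passage from Galois-invariance of a section class in $\NS$ to $\Q$-rationality of the section itself as a descent problem governed by a Brauer-type obstruction. In the rational elliptic surface setting this is more elementary than you suggest. Every section has self-intersection $-\chi(\Ocal_Y)=-1$ and is a smooth rational curve, hence a $(-1)$-curve; and a $(-1)$-curve is the unique effective curve in its numerical class, since two distinct irreducible curves have non-negative intersection while $C^2=-1$. On a rational surface numerical, algebraic and linear equivalence coincide, so once you know $\Gal(\overline{\Q}/\Q)$ acts trivially on $\NS(Y_{\overline{\Q}})$, each section is Galois-fixed as a curve, not merely as a class, and is therefore defined over $\Q$. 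This gives the exact equality $\MW(Y,\pi,\Q)=\MW(Y,\pi,\C)$ directly, with no cohomological machinery and no risk of a finite-index discrepancy.
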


Superficies elípticas de este tipo abundan, ver por ejemplo \cite{Schwartz}.


\section{La conjetura de Denef y Lipshitz cuando el rango no salta} \label{SecNJ}

Sea $\pi:Y\to \Pro^1$ una superficie elíptica sobre $\Q$ como las dadas por el Lema \ref{LemmaES}. Sea $y^2=f(T, x)$ una ecuación de Weierstrass para ella con coeficientes en $\Q[T]$ donde $T$ es una coordenada afín en $\Pro^1$. Dado un campo de números $K$ y un $\beta\in K^\times$ definimos la superficie elíptica $p: Y_K^{(\beta)}\to \Pro^1$ sobre $K$ por medio de la ecuación $\beta  y^2= f(T, x)$, dotada de la sección $\sigma_0$ al infinito. Sobre $\C$ esta superficie elíptica es isomorfa a $\pi: Y\to \Pro^1$, así que no es isotrivial.

\begin{lemma}\label{LemmaRkBeta} Si $K$ es un campo de números y $\beta\in K$ no es un cuadrado en $K$, entonces se tiene que $\MW(Y_K^{(\beta)},p,K)=(\sigma_0)$, o sea, este grupo de secciones sobre $K$ es trivial.
\end{lemma}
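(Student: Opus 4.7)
La estrategia es reducir el problema a la descripción del grupo de Mordell--Weil sobre la extensión cuadrática $L = K(\sqrt{\beta})$ vía descenso de Galois, y luego explotar la hipótesis fuerte del Lema~\ref{LemmaES} de que todas las secciones geométricas de $Y$ ya están definidas sobre $\Q$.

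Primero observo que $L/K$ es una extensión cuadrática no trivial puesto que $\beta$ no es un cuadrado en $K$; sea $\tau$ el generador de $\Gal(L/K)$. El cambio de variables $y \mapsto \sqrt{\beta}\,y$ muestra que la torsión cuadrática $Y_K^{(\beta)}$ se vuelve isomorfa a $Y_L$ sobre $L$, con el cociclo que describe la torsión siendo justamente $[-1]$. Por descenso de Galois para la torsión cuadrática se obtiene entonces
\[
\MW(Y_K^{(\beta)}, p, K) \;\cong\; \{\sigma \in \MW(Y_L, \pi, L) : \tau\cdot\sigma = -\sigma\}.
\]

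Luego, uso la cadena de inclusiones
\[
\MW(Y, \pi, \Q) \;\subseteq\; \MW(Y_K, \pi, K) \;\subseteq\; \MW(Y_L, \pi, L) \;\subseteq\; \MW(Y, \pi, \C).
\]
Por el Lema~\ref{LemmaES}, los grupos de los extremos coinciden, y por lo tanto también los intermedios. En particular, toda sección en $\MW(Y_L, \pi, L)$ está de hecho definida sobre $\Q$, de modo que $\tau$ actúa trivialmente en dicho grupo. Así, el subgrupo descrito arriba es exactamente la $2$-torsión de $\MW(Y, \pi, \Q)$.

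Para concluir, nuevamente por el Lema~\ref{LemmaES} el grupo $\MW(Y, \pi, \Q)$ es libre abeliano, y por tanto sin $2$-torsión. Esto fuerza $\MW(Y_K^{(\beta)}, p, K) = (\sigma_0)$, como se requería. El punto que merece más cuidado es la formulación precisa del descenso de Galois (la identificación del signo $-1$ proveniente de la torsión cuadrática); los demás pasos son formales una vez establecido ese isomorfismo.
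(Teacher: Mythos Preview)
Your proof is correct and follows essentially the same approach as the paper's: both lift a putative nontrivial section of the twist to a section of $Y$ over a larger field, invoke Lemma~\ref{LemmaES} to force it to be defined over $\Q$, and then obtain a contradiction with $\beta$ not being a square (equivalently, with the absence of $2$-torsion in $\MW(Y,\pi,\Q)$). The only difference is packaging: the paper works explicitly with Weierstrass coordinates $(a,b)\mapsto(a,\gamma b)$ to reach $\gamma\in K$, whereas you phrase the same computation via the standard Galois-descent identification $\MW(Y_K^{(\beta)},p,K)\cong\{\sigma\in\MW(Y,\pi,L):\tau\sigma=-\sigma\}$.
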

\begin{proof} Suponga que $\tau=(x,y)=(a,b)\ne \sigma_0$ es una sección con $a,b\in K(T)$. Tenemos $b\ne 0$ porque sobre $\C$ no hay secciones no triviales de $2$-torsión (por el Lema \ref{LemmaES}). Así, si $\gamma$ es una raíz cuadrada de $\beta$ vemos que $\tau'=(a,\gamma b)$ es una sección de $\pi:Y\to \Pro^1$ sobre  $\C$, y por ende sobre $\Q$ por el Lema \ref{LemmaES}. Luego, $\gamma b\in \Q(T)$ y como $b\in K(T)^\times$ concluimos que $\gamma\in K$;  contradicción.
\end{proof}
\begin{proof}[Demostración del Teorema \ref{ThmMain2}] Sea $L/K$ cualquier extensión cuadrática de campos de números. Es generada por la raíz cuadrada de cierto $\beta\in K^\times$ que no es cuadrado. El Lema \ref{LemmaRkBeta} da que $\rk \MW(Y_K^{(\beta)},p,K)=0$. Por hipótesis $\Ncal(Y_K^{(\beta)},p,K)$ es infinito, así que hay infinitos $\alpha\in K$ con $\rk E_\alpha^{(\beta)}(K)=0$, donde $E_\alpha^{(\beta)}$ es la curva elíptica dada por $\beta y^2=f(\alpha,x)$. Esta última es el torcimiento cuadrático por $L$ sobre $K$ de la curva elíptica $E_\alpha$ definida por $y^2=f(\alpha,x)$.

Como $\rk \MW(Y,\pi, K)\ge 1$ (ver el Lema \ref{LemmaES}), del Teorema \ref{ThmSilvermanSp} obtenemos que todos salvo finitos $\alpha\in K$ cumplen que $\rk E_\alpha(K)\ge 1$. Así, podemos elegir $\alpha\in K$ con $\rk E_\alpha^{(\beta)}(K)=0$ y $\rk E_\alpha(K)\ge 1$. Hecho esto, se obtiene
$$
\rk E_\alpha(L) =  \rk E_\alpha(K) + \rk E^{(\beta)}_\alpha(K)=\rk E_\alpha(K)>0
$$
y por el Teorema \ref{ThmEllCriterion} deducimos que $O_K$ es diofantino en $O_L$. Concluimos por el Lema \ref{LemmaShlQuad}.
\end{proof}


\section{Agradecimientos}

Esta investigación fue financiada por ANID (ex CONICYT) FONDECYT Regular 1190442 de Chile. Agradezco a Barry Mazur por comentarios en una versión previa de esta nota, y a Cecília Salgado por responder varias dudas.


\end{document}